\newtheorem{thm}{Theorem}[section]
\newtheorem{lem}[thm]{Lemma}
\newtheorem{conjecture}[thm]{Conjecture}
\theoremstyle{definition}
\theoremstyle{remark}
\numberwithin{equation}{section}
\newcommand{\bZ}{\mathbb{Z}}
\newcommand{\Perm}{\mathrm{Perm}}
\newcommand{\Hol}{\mathrm{Hol}}
\newcommand{\conj}{\mathrm{conj}}
\newcommand{\NHol}{\mathrm{NHol}}
\newcommand{\Aut}{\mathrm{Aut}}
\newcommand{\Inn}{\mathrm{Inn}}
\newcommand{\Norm}{\mathrm{Norm}}
\newcommand{\pmmod}{\hspace{-2.5mm}\pmod}
\newcommand{\addresseshere}{%
  \enddoc@text\let\enddoc@text\relax
}
\begin{document}

\large 

\title{The multiple holomorph of centerless groups}
\author{Cindy (Sin Yi) Tsang}
\address{Department of Mathematics\\
Ochanomizu University\\
2-1-1 Otsuka, Bunkyo-ku\\
Tokyo\\
Japan}
\email{tsang.sin.yi@ocha.ac.jp}\urladdr{http://sites.google.com/site/cindysinyitsang/} 
\date{\today}

\maketitle

\vspace{-7.5mm}

\begin{abstract}Let $G$ be a group. The holomorph $\mathrm{Hol}(G)$ may be defined as the normalizer of the subgroup of either left or right translations in the group of all permutations of $G$. The multiple holomorph $\mathrm{NHol}(G)$ is in turn defined as the normalizer of the holomorph. Their quotient $T(G) = \mathrm{NHol}(G)/\mathrm{Hol}(G)$ has been computed for various families of groups $G$. In this paper, we consider the case when $G$ is centerless, and we show that $T(G)$ must have exponent at most $2$ unless $G$ satisfies some fairly strong conditions. As applications of our main theorem, we are able to show that $T(G)$ has order $2$ for all almost simple groups $G$, and that $T(G)$ has exponent at most $2$ for all centerless perfect or complete groups $G$.
\end{abstract}

\tableofcontents

\vspace{-7.5mm}

\section{Introduction}

Let $G$ be a group and write $\Perm(G)$ for the group of all permutations of $G$. Recall that a subgroup $N$ of $\Perm(G)$ is \emph{regular} if its action on $G$ is both transitive and free, or equivalently, if the map
\[ \xi_N : N \longrightarrow G;\hspace{1em}\xi_N(\eta) = \eta(1)\]
is bijective. The classical examples of regular subgroups of $\Perm(G)$ are the images of the left and right regular representations of $G$, defined by
\[ \begin{cases}
\lambda : G \longrightarrow \Perm(G);\hspace{1em}\lambda(\sigma) = (x\mapsto \sigma x),\\
\rho : G \longrightarrow \Perm(G);\hspace{1em}\rho(\sigma) = (x\mapsto x\sigma^{-1}).
\end{cases}\]
The subgroups $\lambda(G)$ and $\rho(G)$ coincide precisely when $G$ is abelian and are centralizers of each other in $\Perm(G)$. They also have the same normalizer 
\[ \Hol(G) = \Norm(\lambda(G)) = \Norm(\rho(G))\]
in $\Perm(G)$, and $\Hol(G)$ is called the \emph{holomorph} of $G$. Its normalizer
\[ \NHol(G) = \Norm(\Hol(G))\]
in $\Perm(G)$ is in turn called the \emph{multiple holomorph} of $G$. It is a known fact that isomorphic regular subgroups are conjugates (e.g. see \cite[Lemma 2.1]{Tsang ASG} for a proof). Hence, the regular subgroups of $\Perm(G)$ that are isomorphic to $G$ are exactly the conjugates of $\lambda(G)$. For any $\pi \in \Perm(G)$, we have
\[ \Norm(\pi \lambda(G) \pi^{-1}) = \pi \Norm(\lambda(G))\pi^{-1},\]
and so it follows that
\[ \pi \in \NHol(G) \iff \Norm(\pi \lambda(G)\pi^{-1}) = \Hol(G).\]
We then deduce that the quotient
\[ T(G) = \NHol(G)/\Hol(G)\] 
acts regularly via conjugation on the set 
\[\mathcal{H}_0(G) = \left\{ \begin{array}{c}
\mbox{regular subgroups $N$ of $\Perm(G)$ that are isomorphic}\\
\mbox{to $G$ and have normalizer $\Norm(N) = \Hol(G)$ in $\Perm(G)$}
\end{array}\right\}.\]
Research on the group $T(G)$ was initiated by G. A. Miller \cite{Miller} and began to attract attention again since the work of T. Kohl \cite{Kohl}. 

\vspace{2mm}

The structure of $T(G)$ has been computed for various groups $G$. In many of the known cases (e.g \cite{semisimple,Caranti1,Caranti2,Kohl,Mills,Tsang squarefree, Tsang ASG}), interestingly $T(G)$ is elementary $2$-abelian. Nonetheless, there are counterexamples. T. Kohl \cite{Kohl} found, using \textsc{gap} \cite{gap}, two groups $G$ of order $16$ for which $T(G)$ is non-abelian (although the order of $T(G)$ is still a power of $2$). By \cite{Caranti3,Tsang squarefree,Tsang metacyclic}, for odd primes $p$, there are (finite) $p$-groups $G$ of nilpotency class at most $p-1$ and split metacyclic $p$-groups $G$ for which $T(G)$ has an element of order $p$. One might wonder: Is there any non-nilpotent group $G$ for which $T(G)$ is not elementary $2$-abelian? The answer is ``yes". For example, as was mentioned in \cite[(1.5)]{Tsang squarefree}, the order of $T(G)$ is not even a power of $2$ for $G = \textsc{SmallGroup}(a,b)$ with
\[ (a,b) = (48,12),(48,14),(63,1),(80,12),(80,14),\]
which were found using \textsc{Magma} \cite{magma}. The present author \cite[Section 4]{Tsang ADM} also described a method to construct groups of the form
\[ G = A \rtimes (\bZ/p^n\bZ),\,\ n\geq 2,\]
where $A$ is an abelian group of finite exponent coprime to $p$, such that $T(G)$ has an element of order $p$. For the argument to work, one needs $p^{n-1} + p^n\bZ$ to act trivially on $A$. One then finds that all of these counterexamples $G$ have non-trivial center and are solvable groups. 

\vspace{2mm}

In the study of $T(G)$, most of the groups $G$ considered are finite, in which case of course $T(G)$ is also finite. But for infinite groups $G$, the author does not know whether $T(G)$ has to be finite or not. To take the infinite case into account, instead of elementary $2$-abelian, we therefore ask whether $T(G)$ has exponent at most $2$. In view of the known results so far, it seems reasonable to make the following conjectures.

\begin{conjecture}\label{conj1}The quotient $T(G)$ has exponent at most $2$ for all (finite) centerless groups $G$.
\end{conjecture}

\begin{conjecture}\label{conj2}The quotient $T(G)$ has exponent at most $2$ for all (finite) insolvable groups $G$ which does not have any solvable normal subgroup as a direct factor. 
\end{conjecture}

In this paper, we shall study Conjecture \ref{conj1} without the assumption that $G$ is finite. In order to state our main theorem in full generality, we would have to set up some notation. We shall therefore postpone its statement to Theorem \ref{main thm}. Here, let us just state that our result implies that $G$ needs to satisfy some fairly strong conditions when $T(G)$ has exponent greater than $2$ (Theorem \ref{thm}). As applications, we are able to prove Conjecture \ref{conj1} in some special cases (Theorems \ref{thm1}, \ref{thm2}, \ref{thm3}, \ref{thm4}). In particular:
\begin{enumerate}[(1)]
\item $T(G)$ is cyclic of order $2$ for all almost simple groups $G$;
\item  $T(G)$ has exponent at most $2$ for all centerless perfect groups $G$;
\item  $T(G)$ has exponent at most $2$ for all complete groups $G$;
\item $T(G)$ has exponent at most $2$ for all centerless groups $G$ of order $\leq2000$, except possibly when $G$ has order $1536$ or when $G$ has \textsc{SmallGroup} ID  equal to $(605,5),(1210,11)$.
\end{enumerate}
Let us remark that $(1)$ and $(2)$ were known for finite groups $G$ by \cite{Tsang ASG,Caranti2}. But here $G$ could be infinite, and unlike \cite{Tsang ASG} our proof of (1) does not require the classification of finite simple groups. Also $(4)$ was verified using \textsc{Magma} \cite{magma}, and we had to exclude the order $1536$ because there are over $4\times 10^8$ groups of order $1536$ and the code simply takes too long to run.

\section{Preliminaries}\label{prelim sec}

In this section, let $G$ be an arbitrary group, not necessarily centerless. Let us fix an element $\pi \in \NHol(G)$, and we wish to understand when its class in $T(G)$ has order at most $2$, in other words when $\pi^2\in \Hol(G)$ holds. We shall study $\pi$ via a triplet $(f,h,g)$, to be defined below, where $f,h:G\longrightarrow\Aut(G)$ are homomorphisms and $g:G\longrightarrow G$ is a bijection.

\vspace{2mm}

Put $N =\pi \lambda(G)\pi^{-1}$, which is plainly a regular subgroup of $\Perm(G)$ that is isomorphic to $G$. The fact that $\pi \in \NHol(G)$ implies
\begin{align*}
N= \pi \lambda(G) \pi^{-1} &\subseteq \pi \Hol(G)\pi^{-1} = \Hol(G),\\
\Norm(N) = \pi \Norm(\lambda(G))\pi^{-1} &= \pi \Hol(G)\pi^{-1} = \Hol(G).
\end{align*}
It follows that $N$ is in fact a normal subgroup of $\Hol(G)$. Let us remark in passing that the above shows that
\[ \mathcal{H}_0(G) \subseteq \{\mbox{normal regular subgroups of $\Hol(G)$ isomorphic to $G$}\}\]
for the set $\mathcal{H}_0(G)$ defined in the introduction. This inclusion is easily shown to be an equality when $G$ is finite (see \cite[p. 954]{Tsang ASG}), but the author does not know whether it remains true when $G$ is infinite.

\vspace{2mm}

Now, it is known and easily verified that
\begin{equation}\label{Hol}
 \Hol(G) = \lambda(G) \rtimes \Aut(G)=\rho(G) \rtimes \Aut(G).
\end{equation}
Note that $N$ is isomorphic to $G$ via $\sigma\mapsto \pi\lambda(\sigma)\pi^{-1}$. By projecting $\pi\lambda(\sigma)\pi^{-1}$ onto the two components in the latter semidirect product decomposition, we may view $N$ as the image of an injective homomorphism of the form
\[ \beta : G \longrightarrow \Hol(G);\hspace{1em}\beta(\sigma) = \rho(g(\sigma)) f(\sigma),\]
where $f\in \mathrm{Hom}(G,\Aut(G))$ and $g\in \mathrm{Map}(G,G)$. Explicitly, we have
\[ N = \{ \rho(g(\sigma)) f(\sigma) : \sigma \in G\}.\]
It is straightforward to check that $\beta$ being an homomorphism implies that
\begin{equation}\label{relation} 
g(\sigma\tau) = g(\sigma) f(\sigma)(g(\tau)) \mbox{ for all }\sigma,\tau\in G,
\end{equation}
and $N$ being a regular subgroup implies that $g$ is bijective (or see  \cite[Proposition 2.1]{Tsang HG} for a proof). Note that clearly $g(1) =1$.

\vspace{2mm}

Let $\Inn(G)$ denote the inner automorphism group of $G$, and write
\[ \conj : G\longrightarrow \Inn(G);\hspace{1em} \conj(\sigma) = (x\mapsto \sigma x\sigma^{-1})\]
for the natural homomorphism. Define
\begin{equation}\label{h def}
h\in \mathrm{Hom}(G,\Aut(G));\hspace{1em}h(\sigma) = \conj(g(\sigma)) f(\sigma).
\end{equation}
A simple calculation using (\ref{relation}) shows that $h$ is indeed a homomorphism (or see \cite[Proposition 3.4]{Tsang ASG} for a proof). We note that then
\[ N = \{\lambda(g(\sigma)^{-1}) h(\sigma) : \sigma \in G\},\]
corresponding to the first semidirect product decomposition in (\ref{Hol}). Also
\begin{equation}\label{relation'}
g(\sigma\tau) = h(\sigma)(g(\tau))g(\sigma) \mbox{ for all }\sigma,\tau\in G,
\end{equation}
which follows immediately from (\ref{relation}).

\vspace{2mm}

That $N$ is a normal subgroup of $\Hol(G)$ implies the subgroups $f(G),h(G)$ of $\Aut(G)$ are also normal. The next lemma shows that in fact they are in some sense ``simultaneously normal".

\begin{lem}\label{normal lemma}For any $\varphi \in \Aut(G)$ and $\sigma\in G$, we have
\[ \varphi f(\sigma) \varphi^{-1} = f(\sigma_\varphi) \hspace{2mm}
\mbox{and}\hspace{2mm}\varphi h(\sigma)\varphi^{-1} = h(\sigma_\varphi) \]
for the unique $\sigma_\varphi \in G$ such that $\varphi(g(\sigma)) = g(\sigma_\varphi)$.
\end{lem}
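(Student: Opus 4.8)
The plan is to exploit the normality of $N$ in $\Hol(G)$ together with the uniqueness of the two semidirect product decompositions in (\ref{Hol}). Since any $\varphi \in \Aut(G)$ is itself an element of $\Hol(G)$ (it normalizes $\lambda(G)$, being the $\Aut(G)$-part of either decomposition), and $N$ is normal in $\Hol(G)$, the conjugate $\varphi\beta(\sigma)\varphi^{-1}$ lies in $N$ for every $\sigma \in G$; hence it equals $\beta(\tau)$ for a \emph{unique} $\tau \in G$, because $\beta$ is injective. The whole argument then amounts to computing this conjugate explicitly and matching components.

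First I would record the elementary conjugation rules. For any $\varphi \in \Aut(G)$ and $\sigma \in G$ one checks directly from the definitions that $\varphi\rho(\sigma)\varphi^{-1} = \rho(\varphi(\sigma))$, that $\varphi\lambda(\sigma)\varphi^{-1} = \lambda(\varphi(\sigma))$, and consequently, using $\conj(\sigma) = \lambda(\sigma)\rho(\sigma)$, that $\varphi\,\conj(\sigma)\,\varphi^{-1} = \conj(\varphi(\sigma))$. Also $\varphi f(\sigma)\varphi^{-1} \in \Aut(G)$ since $\Aut(G)$ is closed under conjugation by its own elements in $\Perm(G)$. Using $\beta(\sigma) = \rho(g(\sigma))f(\sigma)$ and these rules,
\[ \varphi\beta(\sigma)\varphi^{-1} = \rho(\varphi(g(\sigma)))\cdot \varphi f(\sigma)\varphi^{-1}. \]
As this lies in $N$, it equals $\rho(g(\tau))f(\tau)$ for some $\tau$; comparing the $\rho(G)$- and $\Aut(G)$-components in $\Hol(G) = \rho(G)\rtimes\Aut(G)$ forces $\varphi(g(\sigma)) = g(\tau)$ and $\varphi f(\sigma)\varphi^{-1} = f(\tau)$. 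The first equation says exactly that $\tau = \sigma_\varphi$ (uniqueness of $\sigma_\varphi$ being guaranteed by $g$ being a bijection), and then the second is the desired identity $\varphi f(\sigma)\varphi^{-1} = f(\sigma_\varphi)$.

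The statement about $h$ would then follow formally from the definition $h(\sigma) = \conj(g(\sigma))f(\sigma)$ in (\ref{h def}) together with the conjugation rule for $\conj$:
\[ \varphi h(\sigma)\varphi^{-1} = \conj(\varphi(g(\sigma)))\cdot \varphi f(\sigma)\varphi^{-1} = \conj(g(\sigma_\varphi))f(\sigma_\varphi) = h(\sigma_\varphi). \]
Equivalently, one could repeat the component-matching argument verbatim with the second decomposition $N = \{\lambda(g(\sigma)^{-1})h(\sigma) : \sigma \in G\}$ against $\Hol(G) = \lambda(G)\rtimes\Aut(G)$, which has the pedagogical advantage of treating $f$ and $h$ symmetrically.

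The only point genuinely requiring care — and the closest thing to an obstacle — is the justification that the two components can be read off independently. This is precisely the uniqueness of the factorizations in (\ref{Hol}): every element of $\Hol(G)$ is written as $\rho(\cdot)$ (resp. $\lambda(\cdot)$) times an automorphism in exactly one way. Once that is invoked, everything else is a routine direct computation, and I do not anticipate any deeper difficulty.
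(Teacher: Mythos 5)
Your proof is correct and follows essentially the same route as the paper: conjugate the generic element $\rho(g(\sigma))f(\sigma)$ of $N$ by $\varphi$, use normality of $N$ in $\Hol(G)$ and the uniqueness of the decomposition $\Hol(G)=\rho(G)\rtimes\Aut(G)$ to match components, with bijectivity of $g$ pinning down $\sigma_\varphi$. The paper handles the $h$ identity by running the identical argument on the $\lambda$-form $\lambda(g(\sigma)^{-1})h(\sigma)$ (your stated alternative), whereas you derive it formally from (\ref{h def}); both are fine.
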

\begin{proof}Simply observe that
\begin{align*}
 \varphi \cdot \rho(g(\sigma)) f(\sigma) \cdot \varphi^{-1} & = \rho(g(\sigma_\varphi)) \cdot \varphi f(\sigma)\varphi^{-1},\\
 \varphi \cdot \lambda(g(\sigma)^{-1})h(\sigma)\cdot \varphi^{-1} & = \lambda(g(\sigma_\varphi)^{-1}) \cdot \varphi h(\sigma)\varphi^{-1}.
\end{align*}
That $N$ is normal in $\Hol(G)$ implies that these are elements of $N$, which is equivalent to the two equalities stated in the lemma since $g$ is bijective.
\end{proof}

Let us also explain how the two permutations $\pi$ and $g$ are related. Write
\[ \iota : G\longrightarrow G;\hspace{1em}\iota(\sigma) =\sigma^{-1}\]
for the inversion map. We have $\iota \in \NHol(G)$ and $\rho(G) = \iota\lambda(G)\iota^{-1}$. Its class in $T(G)$ is trivial for $G$ abelian and has order $2$ for $G$ non-abelian. 

\begin{lem}\label{g lemma} Put $g_0 = \iota\circ g$. Then $N = g_0\lambda(G)g_0^{-1}$ and we have
\[ \pi \equiv g_0 \pmod{\Hol(G)}.\]
In particular, by $(\ref{relation})$ and $(\ref{relation'})$, this implies that
\[\pi \equiv \ \begin{cases}\iota\pmmod{\Hol(G)}&\mbox{if $f(G)=1$ (so $g_0$ is an anti-homomorphism)},\\
1\pmmod{\Hol(G)}&\mbox{if $h(G)=1$ (so $g_0$ is a homomorphism)}.
\end{cases}
\]
\end{lem}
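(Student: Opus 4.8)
The plan is to establish the single identity $N = g_0\lambda(G)g_0^{-1}$ first; once this is in hand, both the displayed congruence and the two special cases drop out almost immediately. Indeed, combining $N = g_0\lambda(G)g_0^{-1}$ with the defining relation $N = \pi\lambda(G)\pi^{-1}$ shows that $g_0^{-1}\pi$ normalizes $\lambda(G)$, hence lies in $\Norm(\lambda(G)) = \Hol(G)$, which is exactly $\pi \equiv g_0 \pmod{\Hol(G)}$.

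To prove the key identity I would compute the conjugate $g_0\lambda(\tau)g_0^{-1}$ for an arbitrary $\tau \in G$ and check that it equals $\beta(\tau) = \rho(g(\tau))f(\tau)$, the generic element of $N$; since $g_0$ is a bijection this yields equality of the two subgroups. I would split $g_0 = \iota\circ g$ into two conjugations. First, treating the bijection $g$ as an element of $\Perm(G)$, a direct evaluation on a point $x$ using relation (\ref{relation}) gives $g\lambda(\tau)g^{-1} = \lambda(g(\tau))f(\tau)$: one unwinds $g^{-1}(x)$, applies $\lambda(\tau)$, and then reads off $g(\tau\cdot g^{-1}(x)) = g(\tau)f(\tau)(x)$. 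Second, I would conjugate this by $\iota$ using the two elementary facts that $\iota\lambda(\mu)\iota^{-1} = \rho(\mu)$ for every $\mu \in G$ and that $\iota\varphi\iota^{-1} = \varphi$ for every $\varphi \in \Aut(G)$ (the latter because $\varphi(x^{-1}) = \varphi(x)^{-1}$). Together these give $g_0\lambda(\tau)g_0^{-1} = \rho(g(\tau))f(\tau) = \beta(\tau)$, as required.

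For the two special cases I would reuse the same computation. When $f(G) = 1$ the conjugate simplifies to $g_0\lambda(\tau)g_0^{-1} = \rho(g(\tau))$, so bijectivity of $g$ gives $g_0\lambda(G)g_0^{-1} = \rho(G) = \iota\lambda(G)\iota^{-1}$, whence $\iota^{-1}g_0 \in \Hol(G)$ and therefore $\pi \equiv g_0 \equiv \iota$; relation (\ref{relation}) also shows $g$ is a homomorphism, so $g_0$ is an anti-homomorphism. When $h(G) = 1$ it is cleanest to use the alternative description $N = \{\lambda(g(\sigma)^{-1})h(\sigma) : \sigma \in G\}$, which collapses to $N = \lambda(\{g(\sigma)^{-1} : \sigma \in G\}) = \lambda(G)$ since $g$ is bijective and $G$ is closed under inversion; then $\pi \in \Norm(\lambda(G)) = \Hol(G)$ gives $\pi \equiv 1$, while relation (\ref{relation'}) shows $g$ is an anti-homomorphism and hence $g_0$ a homomorphism. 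The only delicate point in the whole argument is the bookkeeping of the direction of composition together with the two conjugation rules for $\iota$; I expect correctly pinning down $\iota\varphi\iota^{-1} = \varphi$ (inversion fixes automorphisms but swaps left and right translations) to be the one step where a side error could most easily creep in.
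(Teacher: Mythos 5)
Your proposal is correct and follows essentially the same route as the paper: both verify the identity $g_0\lambda(\sigma)g_0^{-1}=\rho(g(\sigma))f(\sigma)$ by a direct computation using (\ref{relation}) and then read off the congruence from $\Norm(\lambda(G))=\Hol(G)$; your factorization of the conjugation into the two steps $g\lambda(\tau)g^{-1}=\lambda(g(\tau))f(\tau)$ and $\iota(\cdot)\iota^{-1}$ is just a bookkeeping variant of the paper's single evaluation at $g(x)^{-1}$. The special cases are also handled soundly (the paper instead notes that $f(G)=1$ or $h(G)=1$ forces $g$ to be an automorphism or anti-automorphism, hence $\iota^{-1}g_0\in\Aut(G)\subset\Hol(G)$, but your derivation via $N=\rho(G)$ or $N=\lambda(G)$ is equivalent).
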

\begin{proof}For any $\sigma,x\in G$, from the relation (\ref{relation}), we see that
\begin{align*}
(g_0\lambda(\sigma)g_0^{-1})(g(x)^{-1})
& = g(\sigma x)^{-1}\\
& = (g(\sigma) f(\sigma)(g(x)))^{-1} \\
& = f(\sigma)(g(x)^{-1}) g(\sigma)^{-1}\\
& = (\rho(g(\sigma))f(\sigma))(g(x)^{-1}).
\end{align*}
Since $g$ is bijective and this holds for all $x\in G$, this implies that
\[  g_0\lambda(\sigma) g_0^{-1}= \rho(g(\sigma)) f(\sigma)\mbox{ for all }\sigma\in G\]
whence the first claim. The second claim follows because $N = \pi\lambda(G)\pi^{-1}$ by definition and we know that $\Norm(N)= \Hol(G)$. 
\end{proof}

\section{Our main theorem}\label{statement sec}

In this section, we shall assume that $G$ is centerless. Let $\pi \in \NHol(G)$ and let $f,h\in\mathrm{Hom}(G,\Aut(G)),\, g\in\Perm(G)$ be defined as in Section \ref{prelim sec}. 

\vspace{2mm}

Before stating our main theorem, let us just explain why the approach to study $\pi$ via $(f,h,g)$ works particularly well for centerless groups $G$. First of all, since $g$ is not a homomorphism unless $f$ is trivial, it could be difficult to understand. But $\conj$ is now an isomorphism, so by (\ref{h def}) we can pass to the pair $(f,h)$ of homomorphisms without losing any information. Second, that
\[ \varphi \conj(\sigma) \varphi^{-1}\conj(\sigma)^{-1} = \conj(\varphi(\sigma)\sigma^{-1}) \mbox{ for all }\varphi\in \Aut(G),\, \sigma\in G\]
implies that any subgroup of $\Aut(G)$ containing $\Inn(G)$ is also centerless.

\vspace{2mm}

With these two simple yet useful facts, we are able to prove that $(f,h,g)$ has to satisfy some fairly strong conditions, hence reducing the possibilities for $\pi$ significantly. The list of conditions is long, but for the convenience of the interested reader who might want to apply our theorem to make further progress on Conjecture \ref{conj1}, we included as much information as possible in the statement. We note that the most crucial conditions are (2) and (3).

\begin{thm}\label{main thm}Let $G$ be a centerless group. In the above notation:
\begin{enumerate}[A.]
\item All of the following  conditions are satisfied.
\begin{enumerate}[$(1)$]
\item $\ker(f)\cap\ker(h)=1$.
\item $[G,G]\subseteq\ker(f)\ker(h)$.
\item $[f(G),h(G)]=1$.
\item $\Inn(G)\subseteq f(G)h(G)$ and $f(G)\Inn(G) = h(G)\Inn(G)$.
\item $f(G)\cap h(G)=1$.
\item $f(G),h(G)$ are centerless normal subgroups of $\Aut(G)$.
\item $f(G)\cap \Inn(G)=f(\ker(h))$.
\item $h(G)\cap \Inn(G)=h(\ker(f))$.
\item $\{f(\sigma)h(\sigma):\sigma\in G\}$ is normal subgroup of $\Aut(G)$ isomorphic to $G$.
\item $g(\ker(f)),g(\ker(h))$ are characteristic subgroups of $G$.
\item $g(\ker(f))\simeq \ker(f)$ and $g(\ker(h))\simeq \ker(h)$.
\item $g$ induces a surjective homomorphism $G\longrightarrow G/g(\ker(h))$.
\item $g$ induces a surjective anti-homorphism $G\longrightarrow G/g(\ker(f))$.
\item $G/\ker(h)\simeq G/g(\ker(h))$ and $G/\ker(f)\simeq G/g(\ker(f))$.
\item $G/\ker(f)\ker(h)\simeq G/g(\ker(f))g(\ker(h))$.
\end{enumerate}
\vspace{2mm}
\item We have $\pi^2\in \Hol(G)$ if and only if both $(a)$ and $(b)$ below are satisfied.
\begin{enumerate}[$(a)$]
\item $g$ induces an anti-homomorphism from $G$ to $G/\ker(f)$.
\item $g$ induces a homomorphism from $G$ to $G/\ker(h)$.
\end{enumerate}
In particular, by $(12),(13)$, we have $\pi^2\in\Hol(G)$ if $(c)$ below holds.
\begin{enumerate}[$(a)$]\setcounter{enumii}{+2}
\item$g(\ker(f))\subseteq \ker(f)$ and  $g(\ker(h))\subseteq \ker(h)$.
\end{enumerate}
\vspace{2mm}
\item The following conditions are equivalent and they imply $(c)$ above.
\begin{enumerate}[$(a)$]\setcounter{enumii}{+3}
\item $\Inn(G)=\{f(\sigma)h(\sigma):\sigma\in G\}$.
\item $G/\ker(f)\ker(h)$ has exponent at most $2$.
\end{enumerate}
\end{enumerate}
\end{thm}


\subsection{Proof of A} Write $[x,y]=xyx^{-1}y^{-1}$ for any $x,y$ in a given group.

\begin{proof}[Proof of $(1)$] For any $\sigma\in \ker(f)\cap \ker(h)$, from (\ref{h def}) we have $\conj(g(\sigma))=1$. But $\conj$ is an isomorphism and $g$ is bijective, whence $\sigma=1$.
\end{proof}

\begin{proof}[Proof of $(2),(3)$] Let $\sigma,\tau\in G$ be arbitrary. By Lemma \ref{normal lemma}, we have
\begin{align*}
f(\sigma) f(\tau) f(\sigma)^{-1}&= f( \tau_\sigma)\\
f(\sigma) h(\tau) f(\sigma)^{-1}& = h(\tau_\sigma)
\end{align*}
for some $\tau_\sigma\in G$, and similarly
\begin{align*}
h(\tau)f(\sigma)h(\tau)^{-1} & = f(\sigma_\tau)\\
h(\tau)h(\sigma)h(\tau)^{-1} & = h(\sigma_\tau)
\end{align*}
for some $\sigma_\tau\in G$. From the first and last equalities, we deduce that
\begin{align*}
\tau_\sigma & = \zeta_f\sigma\tau\sigma^{-1}\mbox{ for some }\zeta_f\in \ker(f),\\ 
\sigma_\tau & = \zeta_h \tau\sigma\tau^{-1}\mbox{ for some }\zeta_h\in \ker(h).
\end{align*}
Plugging them into the middle two equalities then yields
\begin{align*}
f(\sigma)h(\tau)f(\sigma)^{-1} & = h(\zeta_f\sigma\tau\sigma^{-1}),\mbox{ whence } [f(\sigma),h(\tau)]=h(\zeta_f[\sigma,\tau]),\\
h(\tau)f(\sigma)h(\tau)^{-1} & = f(\zeta_h\tau\sigma\tau^{-1}),\mbox{ whence }[h(\tau),f(\sigma)] = f(\zeta_h[\tau,\sigma]).
\end{align*}
They in turn imply that
\[ h(\zeta_f[\sigma,\tau]) = f(\zeta_h[\tau,\sigma])^{-1}\mbox{ and so } h(\zeta_f[\sigma,\tau]\zeta_h^{-1}) = f(\zeta_f[\sigma,\tau]\zeta_h^{-1}).\]
But from (\ref{h def}) this means that
\[ \conj(g(\zeta_f[\sigma,\tau]\zeta_h^{-1})) = 1.\]
Since $\conj$ is an isomorphism and $g$ is bijective, it follows that
\[ \zeta_f[\sigma,\tau]\zeta_h^{-1}=1,\mbox{ or equivalently } [\sigma,\tau] = \zeta_f^{-1}\zeta_h,\]
which proves (2). It also implies that
\[ [f(\sigma),h(\tau)] = h(\zeta_f[\sigma,\tau]) = h(\zeta_f\zeta_f^{-1}\zeta_h) = 1,\]
which proves (3).
\end{proof}

\begin{proof}[Proof of $(4),(5),(6)$] Since $g$ is bijective, it is clear from (\ref{h def}) that (4) holds. It then follows that $f(G)h(G)$ is centerless. Since $f(G)\cap h(G)$ has to lie in the center of $f(G)h(G)$ by (3), it must be trivial and this yields (5). It also implies that $f(G)h(G) = f(G)\times h(G)$, which is centerless and so we deduce that $f(G),h(G)$ are both centerless. That $f(G),h(G)$ are normal in $\Aut(G)$ is already known by Lemma \ref{normal lemma}, whence (6) holds.
\end{proof}

\begin{proof}[Proof of $(7),(8)$] For any $\sigma\in G$, we have by (\ref{h def}) that $f(\sigma)\in \Inn(G)$ if and only if $h(\sigma)\in \Inn(G)$. Since $g$ is bijective, in this case $f(\sigma) = \conj(g(\tau))$ for some $\tau\in G$. But again by (\ref{h def}), this means that
\[ f(\sigma\tau)= f(\sigma)f(\tau) = \conj(g(\tau))f(\tau) = h(\tau).\]
By (5), this in turn implies that
\[ \sigma\tau\in \ker(f),\, \tau\in\ker(h),\mbox{ whence }
\sigma = (\sigma\tau)\tau^{-1} \in \ker(f)\ker(h).\]
We then obtain the inclusions
\begin{align*}
 f(G)\cap \Inn(G) &\subseteq f(\ker(f)\ker(h)) = f(\ker(h)),\\
h(G)\cap\Inn(G) & \subseteq h(\ker(f)\ker(h)) = h(\ker(f)).
\end{align*}
The reverse inclusions are obvious from (\ref{h def}), whence (7),(8) hold.
\end{proof}

\begin{proof}[Proof of $(9)$] Consider the map
\[ \Phi : G \longrightarrow \Aut(G);\hspace{1em}\Phi(\sigma) = f(\sigma)h(\sigma),\]
which is a homomorphism by (3). Its image, which is the subset in question, is then a subgroup of $\Aut(G)$ and is normal by Lemma \ref{normal lemma}. Its kernel lies in $\ker(f)\cap \ker(h)$ by (5), and so is trivial by (1). It follows that $G\simeq \Phi(G)$ and we obtain (9).
\end{proof}

\begin{proof}[Proof of $(10),(11)$] By (\ref{relation}) and (\ref{relation'}), the map $g$ is a homomorphism and an anti-homomorphism when restricted to $\ker(f)$ and $\ker(h)$, respectively. Since $g$ is a bijection, it follows that $g(\ker(f))$ and $g(\ker(h))$ are indeed subgroups of $G$, which are isomorphic to $\ker(f)$ and $\ker(h)$, respectively.

\vspace{2mm}

Next, consider $\varphi\in \Aut(G)$. For any $\zeta\in G$, by Lemma \ref{normal lemma}, we know that
\[ \varphi f(\zeta) \varphi^{-1} = f(\zeta_\varphi) \hspace{2mm}
\mbox{and}\hspace{2mm}\varphi h(\zeta)\varphi^{-1} = h(\zeta_\varphi) \]
for the unique $\zeta_\varphi \in G$ such that $\varphi(g(\zeta)) = g(\zeta_\varphi)$. From these two equalities, respectively, we deduce that
\[ \begin{cases}
\zeta_\varphi\in\ker(f)\mbox{ and so }\varphi(g(\zeta))\in g(\ker(f))&\mbox{for all }\zeta\in \ker(f),\\
\zeta_\varphi\in\ker(h)\mbox{ and so }\varphi(g(\zeta))\in g(\ker(h))&\mbox{for all }\zeta\in \ker(h).
\end{cases}\]
 This shows that $g(\ker(f))$ and $g(\ker(h))$ are characteristic subgroups of $G$, thus proving (10) and (11).
\end{proof}

\begin{proof}[Proof of $(12),(13)$] For any $\sigma,\tau\in G$, we already know by (3) that
\[ [f(\sigma),h(\tau)]=1\mbox{ and similarly }[h(\sigma),f(\tau)]=1.\]
Using (\ref{h def}), we may express these equalities as
\begin{align*}
f(\sigma) \conj(g(\tau))f(\tau) f(\sigma)^{-1} f(\tau)^{-1}\conj(g(\tau))^{-1} &= 1,\\
 h(\sigma)\conj(g(\tau))^{-1}h(\tau) h(\sigma)^{-1} h(\tau)^{-1}\conj(g(\tau)) & = 1.
 \end{align*}
Let us rearrange these equalities and write them as
\begin{align*}
\conj(f(\sigma)(g(\tau))^{-1}g(\tau)) & = f([\sigma,\tau]),\\
\conj(h(\sigma)(g(\tau))g(\tau)^{-1}) & = h([\sigma,\tau]).
\end{align*}
Now, we know from (2) that
\[ [\sigma,\tau] = \zeta_f\zeta_h\mbox{ for some }\zeta_f\in \ker(f),\, \zeta_h\in \ker(h).\]
Using (\ref{h def}) again, we then obtain
\begin{align*}
\conj(f(\sigma)(g(\tau))^{-1}g(\tau)) & = f(\zeta_f\zeta_h) = f(\zeta_h) = \conj(g(\zeta_h)^{-1}),\\
\conj(h(\sigma)(g(\tau))g(\tau)^{-1}) & = h(\zeta_h\zeta_f) = h(\zeta_f) = \conj(g(\zeta_f)).
\end{align*}
Since $\conj$ is an isomorphism, this implies that
\begin{align*}
f(\sigma)(g(\tau)) &\equiv g(\tau) \pmod{g(\ker(h))},\\
h(\sigma)(g(\tau)) & \equiv g(\tau) \pmod{g(\ker(f))}.
\end{align*}
By (\ref{relation}) and (\ref{relation'}), these in turn yield
\begin{align*}
g(\sigma\tau) &\equiv g(\sigma)g(\tau) \pmod{g(\ker(h))},\\
g(\sigma\tau) & \equiv g(\tau)g(\sigma) \pmod{g(\ker(f))}.
\end{align*}
Hence, indeed $g$ induces a homomorphism $G\longrightarrow G/g(\ker(h))$, and also an anti-homomorphism $G\longrightarrow G/g(\ker(f))$. Their surjectivity  is clear because $g$ is bijective. This proves (12),(13).\end{proof}

\begin{proof}[Proof of $(14),(15)$] By (12),(13), we have surjective homomorphisms
\[\begin{cases}
g_h : G\longrightarrow G/g(\ker(h));&\hspace{1em} g_h(\sigma) = g(\sigma)g(\ker(h)), \\[3pt]
g_f' : G \longrightarrow G/g(\ker(f));&\hspace{1em} g_f'(\sigma) = g(\sigma)^{-1}g(\ker(f)),\\[3pt]
 g_{fh} : G\longrightarrow G/g(\ker(f))g(\ker(h));&\hspace{1em}g_{fh}(\sigma) = g(\sigma)g(\ker(f))g(\ker(h)).
\end{cases}\]
Since $g$ is bijective, we have
\[\ker(g_h) = \ker(h)\mbox{ and }\ker(g_f') = \ker(f),\]
whence (14) holds. Observe that by (\ref{relation}), we have
\[ g(\zeta_f)g(\zeta_h) = g(\zeta_f\zeta_h)\mbox{ for all }\zeta_f\in \ker(f),\, \zeta_h\in \ker(h).\]
This means that we have the equality
\[ g(\ker(f))g(\ker(h)) = g(\ker(f)\ker(h)).\]
Since $g$ is bijective, we see that 
\[\ker(g_{fh}) = \ker(f)\ker(h),\]
so (16) follows as well.
\end{proof}

\subsection{Proof of B} 

Put $g_0 = \iota\circ g$ as in Lemma \ref{g lemma}. Then, we have
\[ \pi^2 \in \Hol(G) \iff g_0^2 \in \Hol(G)
\iff g_0^2 \in \Aut(G),\]
where the second equivalence follows from (\ref{Hol}) and the fact that $g_0(1)=1$. Since $g_0$ is a bijection, this means that $ \pi^2 \in \Hol(G)$ precisely when $g_0^2$ is a homomorphism.

\vspace{2mm}

For any $\sigma,\tau\in G$, observe that by (\ref{h def}), we have
\begin{align*}\notag
\conj(g_0^2(\sigma)) & = \conj(g(g_0(\sigma)))^{-1} = f(g_0(\sigma))h(g_0(\sigma))^{-1},\\\notag
\conj(g_0^2(\tau)) & = \conj(g(g_0(\tau)))^{-1} = f(g_0(\tau))h(g_0(\tau))^{-1}.
\end{align*}
Since $f(G)$ and $h(G)$ commute elementwise by (3),  we obtain
\[ \conj(g_0^2(\sigma)g_0^2(\tau)) = f(g_0(\sigma)g_0(\tau)) h(g_0(\tau)g_0(\sigma))^{-1}.\]
But on the other hand, again by (\ref{h def}), we have
\[\conj(g_0^2(\sigma\tau))  = \conj(g(g_0(\sigma\tau)))^{-1} = f(g_0(\sigma\tau))h(g_0(\sigma\tau))^{-1}.\]
Since $\conj$ is an isomorphism and $f(G)\cap h(G)$ is trivial by (5), by comparing the above expressions we see that
\begin{align*}
g_0^2(\sigma\tau) = g_0^2(\sigma)g_0^2(\tau)
&\iff \begin{cases}
g_0(\sigma\tau) \equiv g_0(\sigma)g_0(\tau)\pmmod{\ker(f)},\\
g_0(\sigma\tau) \equiv g_0(\tau)g_0(\sigma)\pmmod{\ker(h)},\end{cases}\\[3pt]
&\iff \begin{cases}
g(\sigma\tau) \equiv g(\tau)g(\sigma) \pmmod{\ker(f)},\\
g(\sigma\tau) \equiv g(\sigma)g(\tau)\pmmod{\ker(h)}.
\end{cases} \end{align*}
These are exactly the conditions (a) and (b), respectively. 

\subsection{Proof of C}

Observe that by (\ref{h def}) and (7), for any $\sigma\in G$ we have 
\begin{equation}\label{iff}
f(\sigma)h(\sigma) \in \Inn(G) \iff f(\sigma^2)\in \Inn(G)\iff \sigma^2\in \ker(f)\ker(h).\end{equation}
 It is then clear that (d) implies (e).  Conversely, suppose that (e) holds. That
\[ \{f(\sigma)h(\sigma): \sigma\in G\} \subseteq \Inn(G)\]
follows immediately from (\ref{iff}). To show the reverse inclusion, note that 
\[ \Inn(G) = \{h(\sigma)f(\sigma)^{-1}: \sigma\in G\}\]
by (\ref{h def}) and the bijectivity of $g$. For any $\sigma\in G$, we may write
\[ \sigma^2 = \zeta_f\zeta_h\mbox{ for some }\zeta_f\in \ker(f),\, \zeta_h\in \ker(h)\]
by the hypothesis (e). Together with (3), it follows that
\[
h(\sigma)f(\sigma)^{-1}  = f(\sigma^2)^{-1} f(\sigma)h(\sigma)
 = f(\zeta_h^{-1}\sigma) h(\zeta_h^{-1}\sigma).
\]
We then obtain the inclusion
\[ \Inn(G) \subseteq \{f(\sigma)h(\sigma):\sigma\in G\}\]
and hence (d) holds. This proves the equivalence of (d) and (e).

\vspace{2mm}

Next, suppose that (d) is satisfied and recall from the proof of (9) that
\[ \Phi : G\longrightarrow \{f(\sigma)h(\sigma):\sigma\in G\};\hspace{1em}\Phi(\sigma) =f(\sigma)h(\sigma)\]
is an isomorphism. Since $\Phi(G) = \Inn(G)$ by the assumption (d) and $\conj$ is also an isomorphism, we may define 
\[\varphi \in \Aut(G);\hspace{1em} \varphi = \Phi^{-1}\circ \conj.\]
For any $\zeta_f\in \ker(f)$ and $\zeta_h\in \ker(h)$, by (\ref{h def}) we know that
\begin{align*}
\conj(g(\zeta_f)) & = h(\zeta_f) = f(\zeta_f)h(\zeta_f), \mbox{ whence }\varphi(g(\zeta_f)) =\zeta_f.\\
\conj(g(\zeta_h)) & = f(\zeta_h)^{-1} = f(\zeta_h^{-1})h(\zeta_h^{-1}),\mbox{ whence }\varphi(g(\zeta_h)) = \zeta_h^{-1}. 
\end{align*}
This implies that
\[ \varphi(g(\ker(f))) = \ker(f)\mbox{ and }\varphi(g(\ker(h)))= \ker(h).\]
But $g(\ker(f))$ and $g(\ker(h))$ are characteristic subgroups of $G$ by (10),(11), so they are in fact equal to $\ker(f)$ and $\ker(h)$, respectively. We then deduce that condition (c) is satisfied.

\section{Applications}\label{app sec}

Although we were unable to prove Conjecture \ref{conj1} in full, our Theorem \ref{main thm} implies that any counterexample $G$, if exists, must satisfy some fairly strong conditions as stated below. Here two normal subgroups $K_1$ and $K_2$ of $G$ are said to be \emph{series-equivalent} if $K_1\simeq K_2$ and $G/K_1\simeq G/K_2$.

\begin{thm}\label{thm}Let $G$ be any non-trivial centerless group.
\begin{enumerate}[$(i)$]
\item If $T(G)$ is not cyclic of order $2$, then there exist
\begin{itemize}
\item non-trivial proper characteristic subgroups $K_1,K_2$ of $G$,
\item centerless normal subgroups $Q_1,Q_2$ of $\Aut(G)$,
\end{itemize}
such that all of the following conditions are satisfied.
\begin{itemize}
\item $K_1\cap K_2=1$ and $[G,G]\subseteq K_1K_2$.
\item $Q_1\cap Q_2=1$, $\Inn(G)\subseteq Q_1Q_2$, $Q_1\Inn(G)=Q_2\Inn(G)$.
\item $G/K_1\simeq Q_1$ and $G/K_2\simeq Q_2$.
\end{itemize}
\vspace{2mm}
\item If $T(G)$ has exponent greater than $2$, then the $K_1,K_2,Q_1,Q_2$ in $(i)$ may be chosen to satisfy all of the following conditions.
\begin{itemize}
\item $G/K_1K_2$ has exponent greater than $2$.
\item $K_1$ is series-equivalent to a normal subgroup $K$ of $G$ with $K_1\not\subseteq K$.
\item $Q_1Q_2$ contains a normal subgroup $\Gamma$ of $\Aut(G)$ which is isomorphic to $G$ but not equal to $\Inn(G)$.
\item $Q_1,Q_2$ are neither contained in $\Inn(G)$.
\end{itemize}
\end{enumerate}
\end{thm}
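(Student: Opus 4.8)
The plan is to deduce both parts as essentially a dictionary translation of Theorem \ref{main thm}, applied to a well-chosen $\pi\in\NHol(G)$. In every case I would set
\[ K_1=g(\ker(f)),\quad K_2=g(\ker(h)),\quad Q_1=f(G),\quad Q_2=h(G),\]
so that the $K_i$ are non-trivial proper characteristic subgroups by $(10),(11)$ and the $Q_i$ are centerless normal subgroups of $\Aut(G)$ by $(6)$. The isomorphisms $G/K_1\simeq Q_1$ and $G/K_2\simeq Q_2$ are then $(15)$ and $(14)$ together with the first isomorphism theorem. For the first bullet of $(i)$ I would note $K_1\cap K_2=g(\ker(f)\cap\ker(h))=1$ by $(1)$ and bijectivity of $g$, while $K_1K_2=g(\ker(f)\ker(h))$ is characteristic with quotient isomorphic to $G/\ker(f)\ker(h)$ by $(16)$; the latter is abelian by $(2)$, so $[G,G]\subset K_1K_2$. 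The second bullet is $(5)$, $(4)$, and the observation that $h(\sigma)\equiv f(\sigma)\pmod{\Inn(G)}$ by $(\ref{h def})$, which yields $Q_1\Inn(G)=Q_2\Inn(G)$.

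The hypothesis on $T(G)$ enters only to guarantee a usable $\pi$ and the non-triviality/properness of the $K_i$. Since $G$ is non-trivial and centerless it is non-abelian, so the class of $\iota$ has order $2$ and $|T(G)|\ge2$; if $T(G)$ is not cyclic of order $2$ there is a $\pi$ whose class avoids $\{1,[\iota]\}$. From the two descriptions $N=\{\rho(g(\sigma))f(\sigma)\}=\{\lambda(g(\sigma)^{-1})h(\sigma)\}$ together with Lemma \ref{g lemma} one checks that $[\pi]=1\iff h(G)=1$ and $[\pi]=[\iota]\iff f(G)=1$, whence $\ker(f),\ker(h)\neq G$ and the $K_i$ are proper. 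Non-triviality is the one genuinely new step: if $\ker(f)=1$ then $(2)$ gives $[G,G]\subset\ker(h)$, so $h(G)\simeq G/\ker(h)$ is abelian and, being centerless by $(6)$, must be trivial, forcing $[\pi]=1$; symmetrically $\ker(h)\neq1$. This settles $(i)$.

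For $(ii)$ I would start from a $\pi$ with $\pi^2\notin\Hol(G)$, which exists exactly when $T(G)$ is not elementary $2$-abelian; since $\iota^2=\mathrm{id}$ gives $[\iota]^2=1$, the class of such $\pi$ again avoids $\{1,[\iota]\}$, so the construction above applies. By part B, $\pi^2\notin\Hol(G)$ means $(a),(b)$ fail to both hold, hence $(c)$ fails, hence by part C both $(d)$ and $(e)$ fail. Failure of $(e)$ says $G/\ker(f)\ker(h)$ is not elementary $2$-abelian, which via $(16)$ is the bullet $G/K_1K_2$ not elementary $2$-abelian. Failure of $(d)$ says $\{f(\sigma)h(\sigma):\sigma\in G\}\neq\Inn(G)$; by $(9)$ this set is a normal subgroup of $\Aut(G)$ isomorphic to $G$ and contained in $Q_1Q_2$, giving the third bullet. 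Next, $f(G)\subset\Inn(G)$ would give $f(G)=f(\ker(f)\ker(h))$ by $(7)$, forcing $G=\ker(f)\ker(h)$ and so $G/\ker(f)\ker(h)=1$, contradicting the failure of $(e)$; thus $Q_1\not\subset\Inn(G)$, and likewise $Q_2\not\subset\Inn(G)$ via $(8)$. Finally, failure of $(c)$ means $g(\ker(f))\not\subset\ker(f)$ or $g(\ker(h))\not\subset\ker(h)$; relabelling the two index-pairs I may assume $K_1=g(\ker(f))\not\subset\ker(f)$, and then $K=\ker(f)$ is a normal subgroup series-equivalent to $K_1$ by $(10)$ and $(15)$ with $K_1\not\subset K$.

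The substantive content lives in Theorem \ref{main thm}, so the bulk of this proof is organisational: matching each bullet to the correct numbered condition and checking the isomorphism-type claims. The one place I expect to need an actual argument rather than bookkeeping is the non-triviality of $K_1,K_2$ in $(i)$, where the \emph{abelian-and-centerless-forces-trivial} device (combining $(2)$ with $(6)$) is the crux. A secondary subtlety is the relabelling in the last bullet of $(ii)$: it is legitimate not because $f$ and $h$ are interchangeable as maps — they are not — but because all the conditions listed in $(i)$ are symmetric under the simultaneous swap $(K_1,Q_1)\leftrightarrow(K_2,Q_2)$, and $K_1K_2$, $Q_1Q_2$, and $\{f(\sigma)h(\sigma)\}$ are unchanged by it.
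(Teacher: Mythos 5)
Your proposal is correct and follows essentially the same route as the paper: the same choice $(K_1,K_2,Q_1,Q_2)=(g(\ker(f)),g(\ker(h)),f(G),h(G))$ (up to the swap), the same \emph{abelian-and-centerless-forces-trivial} argument for non-triviality of the kernels, the same chain B $\Rightarrow$ not (c) $\Rightarrow$ not (d), not (e) for part (ii), and the same contradiction via (7), (8), (16) for $Q_i\not\subset\Inn(G)$. The paper handles the relabelling by listing both orderings of the quadruple up front, which is exactly the symmetry observation you make at the end.
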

\begin{proof}Let $\pi \in \NHol(G)$ and let $(f,h,g)$ be defined as in Section \ref{prelim sec}. 

\vspace{2mm}

Let $\iota$ be as in Lemma \ref{g lemma} and notice that  the classes $\Hol(G)$ and $\iota\Hol(G)$ are distinct since $G$ is non-abelian. Also, by Lemma \ref{g lemma}, they correspond to when $h(G)=1$ and $f(G)=1$, respectively.

\vspace{2mm}

Suppose that $T(G)$ is not cyclic of order $2$ so we can pick $\pi\in \NHol(G)$ to be such that $\ker(f)$ and $\ker(h)$ are both proper subgroups of $G$. Moreover, by Theorem \ref{main thm}, if $\ker(f)=1$, then $[G,G]\subseteq \ker(h)$, and $G/\ker(h) \simeq h(G)$ would be both abelian and centerless, which is impossible because $\ker(h)$ is proper. If $\ker(h)=1$, then we obtain a contradiction similarly. We conclude that $\ker(f)$ and $\ker(h)$ are also non-trivial. Since $g$ is a bijection, we deduce that the subgroups $g(\ker(f))$ and $g(\ker(h))$ are also non-trivial and proper. We may then take $(K_1,K_2,Q_1,Q_2)$ to be either of the following
\begin{align}
    (g(\ker(f)),g(\ker(h)),f(G),h(G)) \label{choice1}
   \\
   (g(\ker(h)),g(\ker(f)),h(G),f(G))  \label{choice2}
\end{align}
so that (i) holds. Notice that $[G,G]\subseteq K_1K_2$ by (2),(15) of Theorem \ref{main thm}. All of the other required conditions are clear from Theorem \ref{main thm}.
 
\vspace{2mm}

Suppose now that $T(G)$ has exponent greater than $2$ so we can pick $\pi\in \NHol(G)$ to be such that $\pi^2\not\in \Hol(G)$. By Theorem \ref{main thm}, one of
\[ g(\ker(f)) \subseteq \ker(f)\mbox{ and }g(\ker(h))\subseteq \ker(h)\]
has to fail, and we may take
\[ \begin{cases}
(K_1,K_2,Q_1,Q_2) = (\ref{choice1}),\,\ K = \ker(f) &\mbox{if }g(\ker(f))\not\subseteq \ker(f),\\
(K_1,K_2,Q_1,Q_2) = (\ref{choice2}),\,\ K = \ker(h) &\mbox{otherwise},
\end{cases}\]
so that $K_1\not\subseteq K$ but $K_1$ and $K$ are series-equivalent. Again by Theorem \ref{main thm}, we know that $G/K_1K_2$ has exponent greater than $2$, and we may take
\[ \Gamma = \{f(\sigma)h(\sigma): \sigma\in G\}.\]
Moreover, if either of $Q_1,Q_2$ were to lie in $\Inn(G)$, then (7),(8),(15) of Theorem \ref{main thm} would imply that
\[ G = \ker(f)\ker(h)\mbox{ and so } G/K_1K_2\simeq G/\ker(f)\ker(h)\]
is trivial, which is a contradiction. This proves (ii).
\end{proof}

We shall end by applying Theorem \ref{thm} to prove that Conjecture \ref{conj1} holds for certain families of centerless groups $G$.

\vspace{2mm}

A group $G$ is said to be \emph{almost simple} if $A\subseteq G \subseteq \Aut(A)$ for some non-abelian simple group $A$, where $A$ is being identified with $\Inn(A)$. Since $A$ is centerless, so is $G$. It is known that $A$ is the socle of $G$, so every non-trivial normal subgroup of $G$ contains $A$, and that $\Aut(G)$ is also an almost simple group with socle isomorphic to $A$ (e.g see \cite[Lemmas 4.2 and 4.3]{Tsang ASG}).

\begin{thm}\label{thm1} The quotient $T(G)$ is cyclic of order $2$ for all almost simple groups $G$. 
\end{thm}
\begin{proof}Since $\Aut(G)$ is also almost simple, one of the $Q_1,Q_2$ in Theorem \ref{thm} must be trivial, for otherwise $Q_1\cap Q_2$ would contain the socle of $\Aut(G)$. It implies that the $K_1,K_2$ in Theorem \ref{thm} cannot be both proper.
 \end{proof}

A group $G$ is said to be \emph{perfect} if $G=[G,G]$.

\begin{thm}\label{thm2} The quotient $T(G)$ has exponent at most $2$ for all centerless perfect groups $G$.
\end{thm}
\begin{proof}Since $G$ is perfect, the $K_1,K_2$ in Theorem \ref{thm} satisfy $G = K_1K_2$, and so certainly $G/K_1K_2$ cannot have exponent greater than $2$.
\end{proof}

A group is said to be \emph{complete} if $G$ is centerless and $\Aut(G) = \Inn(G)$.

\begin{thm}\label{thm3}
The quotient $T(G)$ has exponent at most $2$ for all complete groups $G$.
\end{thm}
\begin{proof}Since $\Aut(G)=\Inn(G)$, trivially the $Q_1,Q_2$ in Theorem \ref{thm} have to lie inside $\Inn(G)$, so they cannot exist.
\end{proof}

Finally, we consider finite centerless groups whose orders are \emph{small}. Using our Theorem \ref{thm} and by running the code in the Appendix in  \textsc{Magma} \cite{magma}, we verified the following. The author thanks Dr. Qin Chao (Chris King) for running the code for her on his computer.

\begin{thm}\label{thm4}The quotient $T(G)$ has exponent at most $2$ for all centerless groups $G$ of order at most $2000$, except possibly when
\[|G| = 1536, \,\ G = \textsc{SmallGroup}(605,5),\,\ G = \textsc{SmallGroup}(1210,11).\]
\end{thm}

We had to exclude the order $1536$ because there are over $4\times 10^8$ groups of order $1536$ and the code just takes too long to run. Although many of these groups are not centerless, our code still needs to go through each group and test whether it is centerless. We were also unable to rule out the groups
\[\textsc{SmallGroup}(605,5),\,\ \textsc{SmallGroup}(1210,11)\]
using only the conditions in Theorem \ref{thm}. What is going on with these two groups is that, in the notation of Theorem \ref{main thm}, for non-trivial $f,h$ the kernels $\ker(f),\ker(h)$ are actually series-equivalent, so there is the possibility that
\[
 g(\ker(f)) = \ker(h),\,\ g(\ker(h)) = \ker(f).
 \]
Theorem \ref{main thm} is not strong enough to rule out this possibility and perhaps it actually occurs. To show that the class of the corresponding $\pi\in \NHol(G)$ in $T(G)$ has order $2$, one would need more information. 


\section*{Acknowledgment}

The author would like to thank the referee for helpful comments.

\newpage

\addresseshere

\begin{landscape}

\section*{Appendix: \textsc{Magma} code}

\begin{lstlisting}[
  mathescape,
  columns=fullflexible,
  basicstyle=\ttfamily,
]

IsCenterless:=function(X)
return #Center(X) eq 1;
end function; 

IsSeriesEquivalent:=function(G,X,Y)
return IsIsomorphic(X,Y) and IsIsomorphic(G/X,G/Y);
end function;

TestOrders:=[n:n in [2..2000]|not n eq 1536 and not IsPrimePower(n)];
//Groups of prime power order have non-trivial center and so might be ignored.
for n in TestOrders do
  for i in [1..NumberOfSmallGroups(n)] do
  G:=SmallGroup(n,i);
    if IsCenterless(G) then
    KSet:=[K`subgroup:K in NormalSubgroups(G)|
                      not K`order in [1,n] and IsCenterless(G/K`subgroup)];
    //The non-trivial proper normal subgroups of $G$ with centerless quotient.
    //A set of possibilities for the choice of $K$.
    AutG:=AutomorphismGroup(G);
    OutG:=[a:a in Generators(AutG)|not IsInner(a)];
    KSetChar:=[K:K in KSet|forall{a:a in OutG|a(K) eq K}];
    //The non-trivial proper characteristic subgroups of $G$ with centerless quotient.
    //A set of possibilities for the choices of $K_1$ and $K_2$.
    Comm:=CommutatorSubgroup(G);
    K12Set:=[<K1,K2>:K1,K2 in KSetChar|
                          #(K1 meet K2) eq 1
                      and Comm subset sub<G|K1,K2> 
                      and not Exponent(G/sub<G|K1,K2>) le 2
                      and exists{K:K in KSet|IsSeriesEquivalent(G,K,K1) and not K eq K1}];
    //A set of possibilities for the choice of $(K_1,K_2)$.
    QOrd:={n/#K[1]:K in K12Set} join {n/#K[2]:K in K12Set};
    //A set of possible orders of $Q_1\simeq G/K_1$ and $Q_2\simeq G/K_2$.
    Aut:=PermutationGroup(AutG);
    QSet:=[Q`subgroup:Q in NormalSubgroups(Aut:OrderDividing:=n)|
                      Q`order in QOrd and IsCenterless(Q`subgroup)];
    //A set of possibilities for the choices of $Q_1$ and $Q_2$.
    IsoToG:=[N`subgroup:N in NormalSubgroups(Aut:OrderEqual:=n)|IsIsomorphic(G,N`subgroup)];
    Q12Set:=[<Q1,Q2>:Q1,Q2 in QSet|
                          #(Q1 meet Q2) eq 1
                      and #[N:N in IsoToG|N subset sub<Aut|Q1,Q2>] ge 2]; 
    //A set of possibilities for the choice of $(Q_1,Q_2)$.
      if exists{<K[1],K[2],Q[1],Q[2]>:K in K12Set,Q in Q12Set|
                        IsIsomorphic(G/K[1],Q[1]) and IsIsomorphic(G/K[2],Q[2])} then
      <n,i>; //$\textsc{SmallGroup}$ ID of the groups $G$ that did not pass the test. 
      end if;
    end if;
  end for;
end for;

Output:
<605,5>
<1210,11>

\end{lstlisting}

\end{landscape}

\end{document}